\newtheorem{theorem}{Theorem}[section]
\newaliascnt{lemma}{theorem}
\newtheorem{lemma}[lemma]{Lemma}
\newaliascnt{fact}{theorem}
\newtheorem{fact}[fact]{Fact}
\newaliascnt{corollary}{theorem}
\newaliascnt{proposition}{theorem}
\newaliascnt{df}{theorem}
\theoremstyle{definition}\newtheorem{df}[df]{Definition}
\newaliascnt{remark}{theorem}
\theoremstyle{remark}
\numberwithin{equation}{section}
\newcommand{\GG}{\mathds{G}}
\newcommand{\HH}{\mathds{H}}
\newcommand{\KK}{\mathds{K}}
\newcommand{\XX}{\mathds{X}}
\newcommand{\YY}{\mathds{Y}}
\newcommand{\CA}{$C^{\ast}$-algebra}
\newcommand{\nc}{{\text{---}\scriptscriptstyle{\|\cdot\|}}}
\DeclareMathOperator{\spn}{span_{\mathds{C}}}
\DeclareMathOperator{\Ob}{Ob}
\DeclareMathOperator{\Mor}{Mor}
\DeclareMathOperator{\Pol}{\mathcal{O}}
\DeclareMathOperator{\id}{id}
\DeclareDocumentCommand\CuS{ m }{C^u(S_{#1}^+)}
\DeclareDocumentCommand\BC{ m }{\mathsf{B}(\mathds{C}^{#1})}
\DeclareDocumentCommand\Cu{ m g }{%
    {\IfNoValueT {#2} {C^u(\mathds{#1})} 
    \IfNoValueF {#2} {C^u(\mathds{#1}_{#2})} }
    }
\title[$I_{k,n}^+$ generate $S_n^+$]{Quantum Increasing Sequences generate Quantum Permutation Groups}
\author{Pawe{\l} J\'oziak}
\address{Faculty of Mathematics and Information Science, Warsaw University of Technology, ul. Koszykowa 75, 00-662 Warszawa, Poland.}
\email{p.joziak@mini.pw.edu.pl}
\begin{document}

\begin{abstract}We answer a question of A.~Skalski and P.M.~So{\l}tan (2016) about inner faithfulness of the S.~Curran's map of extending a quantum increasing sequence to a quantum permutation in full generality. To do so, we exploit some novel techniques introduced by Banica (2018) and Brannan, Chirvasitu, Freslon (2018) concerned with the Banica's conjecture regarding quantum permutation groups. Roughly speaking, we find a inductive setting in which the inner faithfulness of Curran's map can be boiled down to inner faithfulness of similar map for smaller algebras and then rely on inductive generation result for quantum permutation groups of Brannan, Chirvasitu and Freslon.\end{abstract}
\maketitle

\section*{Introduction}
Let $\GG$ be a compact quantum group (in the sense of Woronowicz, but throughout the note we will not need any of the functional analytic features of the associated Hopf-\CA), let $\Pol(\GG)$ be its associated coordinate ring and assume $\beta\colon\Pol(\GG)\to\mathcal{B}$ is a ${}^*$-representation of $\Pol(\GG)$ as a ${}^*$-algebra in some ${}^*$-algebra. Via abstract Gelfand-Naimark duality, such a map corresponds to a map $\hat{\beta}\colon\mathds{X}\to\GG$ and it is natural to ask what is the smallest quantum subgroup containing $\hat{\beta}(\mathds{X})$, or -- in other words -- what the quantum subgroup generated by $\hat{\beta}(\mathds{X})\subset\GG$ is. The answer to this type of questions was studied earlier in \cite{Ban14,BB10,BCV,SS16} in the case of compact quantum groups and later extended to locally compact quantum groups in \cite{PJphd,JKS}.

The concept of a subgroup is central to treating quantum groups from the group-theoretic perspective and many efforts were made to provide accurate descriptions of various aspects of this concept, as well as providing some nontrivial examples, see e.g. \cite{BB09,BY14,DKSS,Pod95}. This note deals with subgroups of the quantum permutation groups, introduced by Wang in \cite{Wang98}. It was observed in \cite{KS09} that quantum permutations can be used to study distributional symmetries of infinite sequences of non-commutative random variables that are identically distributed and free modulo the tail algebra, thus extending the classical de Finetti's theorem to the quantum/free realm.

Another extension of de Finetti's theorem was given by Ryll-Nardzewski: he observed that instead of symmetry of joint distributions under permutations of random variables it is enough to consider subsequences and compare these type of joint distributions to obtain the same conclusion. What this theorem really boils down to is the fact that one can canonically treat the set $I_{k,n}$ of increasing sequences (of indices) as a subset of all permutations $S_n$, and this subset is big enough to generate the whole symmetric group: $\langle I_{k,n}\rangle=S_n$, unless $k=0$ or $k=n$.

This viewpoint was utilised in \cite{Cur11} by Curran to extend theorem of Ryll-Nardzewski to the quantum case: he introduced the quantum space of quantum increasing sequences $I^+_{k,n}$ and described how to canonically extend a quantum increasing sequence to a quantum permutation in $S_n^+$. The analytic properties of the \CA\ $C(I^+_{k,n})$ were strong enough to provide an extension of Ryll-Nardzewski to the quantum/free case. However, these results did not say anything about the subgroup of quantum permutation group that is generated by quantum increasing sequences.

If the analogy with the classical world is complete, one would expect that in fact $\overline{\langle I^+_{k,n}\rangle}=S_n^+$ for all $n$ and $k\neq0,n$. This was ruled out already in \cite{SS16}, where it was observed that $\overline{\langle I^+_{k,n}\rangle}=S_n$ whenever $k=1,n-1$. The second best thing one could hope for is that $\overline{\langle I^+_{k,n}\rangle}=S_n^+$ for at least one $k\in\{2,\ldots,n-2\}$, as this would explain the results of Curran in a more group-theoretic manner. In general, \cite[Question 7.3]{SS16} asks for the complete description of all $\overline{\langle I^+_{k,n}\rangle}$ and emphasises the case $n=4$ and $k=2$ as the first non-trivial case to study. We gave a positive answer in this case in \cite{remark} and explained that in general $S_n\subset\overline{\langle I^+_{k,n}\rangle}\subset S_n^+$, the left inclusion being strict whenever $2\leq k\leq n-2$. Banica's conjecture asks whether there exists a compact quantum group $\mathds{G}$ satisfying $S_n\subset \mathds{G}\subset S_n^+$ such that both inclusions are strict: conjecturally only one of them should be. The conjecture found its positive answer for $n=4$ at \cite{BB09}, and only recently for $n=5$ by providing a deep connection to the subfactor theory, see \cite{Banica18}. The latter was further utilised to argue a inductive-type generation result for quantum permutation groups in \cite{BCF}. 

In this manuscript we use those result to show that $\overline{\langle I^+_{k,n}\rangle}=S_n^+$ for all $n\geq4$ and $2\leq k\leq n-2$, answering a question of Skalski and Sołtan from \cite{SS16}. To do so, we extend the criterion we gave in \cite{remark} to cover a wider class of subsets of the generating sets. Informally, we show that $I_{k,n}\subset I^+_{k,n}$ and that $I^+_{k,n-1}, I^+_{k-1,n-1}\subset I^+_{k,n}$. Consequently, $\overline{\langle I^+_{k-1,n-1}, I_{k,n}\rangle}, \overline{\langle I^+_{k,n-1}, I_{k,n}\rangle} \subset \overline{\langle I^+_{k,n}\rangle}$. This enables us to put ourselves in the inductive generation framework: assuming $\overline{\langle I_{k,n-1}^+\rangle}=\overline{\langle I_{k-1,n-1}^+\rangle}=S_{n-1}^+$ and that $\langle I_{k,n-1}\rangle=S_n$, we deduce that $\overline{\langle S_{n-1}^+,S_n\rangle}\subset\overline{\langle I_{k,n}^+\rangle}$. From \cite{BCF} one knows that $\overline{\langle S_{n-1}^+,S_n\rangle}=S_{n}^+$. The latter result relies heavily on the solution of Banica's conjecture for $n=5$.

The precise meaning of the aforementioned ideas is described in the course of the paper. \autoref{sec:preliminaries} serves mainly as preliminaries needed to settle the notation for compact quantum groups (\autoref{sec:cqg}), Hopf images (\autoref{sec:hopfimage}) and quantum permutation groups together with quantum increasing sequences (\autoref{sec:qpg-qis}). The proof of the main Theorem is outlined in \autoref{sec:main}.

\section{Preliminaries}\label{sec:preliminaries}
\subsection{Compact quantum groups}\label{sec:cqg} This part is devoted to specifying notation and some viewpoints in the theory of compact quantum groups. For details, we refer to \cite{SLW95b,Timm,NT13}.
\begin{df}\label{df:cqg}
A \emph{compact quantum group} is defined by its \emph{continuous functions algebra} $C(\GG)$ and a \emph{coproduct} $\Delta_\GG\colon C(\GG)\to C(\GG)\otimes C(\GG)$, where $C(\GG)$ is a unital $C^*$-algebra and $\Delta_\GG$ is a unital ${}^*$-homomorphism satisfying the Podleś conditions: 
\begin{align*}
\spn\left\{(\mathds{1}\otimes a)\Delta_\GG(a'):a,a'\in C(\GG)\right\}^\nc=C(\GG)\otimes C(\GG) \\
 \spn\left\{(a\otimes \mathds{1})\Delta_\GG(a'):a,a'\in C(\GG)\right\}^\nc=C(\GG)\otimes C(\GG) 
\end{align*}
The tensor product on the right hand side of $\Delta_{\GG}$ is the $C^*$-algebraic minimal tensor product.
\end{df}
\begin{df}\label{df:corep}
 An $n$-dimensional \emph{representation} of $\GG$ is a corepresentation of $C(\GG)$, i.e. an element $u\in\BC{n}\otimes C(\GG)$ such that \[\Delta_\GG(u_{ij})=\sum_{k=1}^nu_{ik}\otimes u_{kj},\]
 where $u_{ij}=(\langle e_i|\cdot|e_j\rangle\otimes\id)u$ and $(e_i)_{1\leq i\leq n}\subset \mathds{C}^n$ denote the standard orthonormal basis of $\mathds{C}^n$.
\end{df}
Developing the representation theory of $\GG$ one shows that there exists a dense Hopf-$^*$-algebra $\Pol(\GG)\subseteq C(\GG)$. It has numerous $C^*$-completions ($C(\GG)$ being one of them), but in particular it has the universal $C^*$-completion $\Cu{G}$. The difference is like between $C^*_{\max}(\mathds{F}_2)$ and $C^*_{\mathnormal{r}}(\mathds{F}_2)$ -- even though they are different as $C^*$-algebras, the group object under the carpet is the same. We will adopt this viewpoint when studying compact quantum groups and will not dig into the issues of different completions (for further discussion on this topic, see e.g. \cite{KS12}). We will just assume that the compact quantum group $\GG$ is studied by $\Cu{G}$, as one can always replace the original completion with the universal one.
\begin{df}\label{df:cmqg}
A \emph{compact matrix quantum group} is a pair $\left(\Cu{G},u\right)$ such that $u\in\BC{n}\otimes\Cu{G}$ is a representation of $\GG$ and $\{u_{ij}:1\leq i,j\leq n\}$ generates $\Pol(\GG)$ as ${}^*$-algebra. Such an element $u$ is called a \emph{fundamental corepresentation} of $\GG$.
\end{df}
\begin{df}\label{df:subgroup}
 Given two compact quantum groups $\HH,\GG$ we say that \emph{$\HH$ is embedded into $\GG$} (or shorter: $\HH\subset\GG$ is a closed quantum subgroup), if there exists a surjective unital ${}^*$-homomorphism $\pi\colon\Cu{G}\to\Cu{H}$ intertwining the respective coproducts: \[(\pi\otimes\pi)\circ\Delta_\GG=\Delta_\HH\circ\pi.\]
\end{df}
Note that the same compact quantum group $\HH$ can be embedded into $\GG$ in a not necessarily unique way, thus when we write $\HH\subset\GG$ we mean an inclusion specified by a particular $\pi$.


\subsection{Hopf image}\label{sec:hopfimage}
The Hopf image, studied in detail in the case of compact quantum groups in \cite{BB10, SS16} and in the case of locally compact quantum groups in \cite{JKS}, is concerned with the following situation. Let $\GG$ be a compact quantum group. Let $\mathnormal{B}$ be a unital $C^{\ast}$-algebra and let $\beta\colon\Cu{G}\to\mathnormal{B}$ be a unital ${}^*$-homomorphism. We think of it as the Gelfand dual of a map $\widehat{\beta}\colon\XX\to\GG$ from a quantum space into a quantum group and ask what is the closed quantum subgroup of $\GG$ generated by $\widehat{\beta}(\mathbb{X})\subset\GG$. We will abuse the notation and write $\Cu{X}$ instead of $\mathnormal{B}$.

\noindent\begin{minipage}{.58\textwidth}
\begin{flushleft}
\hspace*{6mm} Formally speaking, we consider the following category, which we denote by $\mathcal{C}_{\beta}$. Objects of $\mathcal{C}_{\beta}$ are triples $(\pi, \HH, \tilde{\beta})$ consisting of: a closed quantum subgroup $\HH$ of $\GG$ such that $\pi\colon\Cu{G}\to\Cu{H}$ is the associated ${}^*$-homomorphism intertwining the coproducts and $\tilde{\beta}\circ\pi=\beta$, i.e.~the map $\beta$ factors through the $C^*$-algebra $\Cu{H}$ of functions on the subgroup $\HH$ (where $\HH$ is embedded into $\GG$ via $\pi$) and $\tilde{\beta}\circ\pi=\beta$ is the factorisation. For two objects $\mathds{h}=(\pi, \HH, \tilde{\beta})$, $\mathds{k}=(\pi', \KK, \beta')\in \Ob(\mathcal{C}_{\beta})$, a morphism $\varphi\in \Mor_{\mathcal{C}_{\beta}}(\mathds{h},\mathds{k})$ is a unital ${}^*$-homomorphism $\varphi\colon\Cu{K}\to\Cu{H}$ (note that the direction of arrows in $\mathcal{C}_{\beta}$ matches the direction of maps on the level of quantum groups, which is opposite to the ones on the level of $C^*$-algebras of functions), which intertwines the respective coproducts and such that the diagram on the right commutes. 
\end{flushleft}
\end{minipage}
\begin{minipage}{.4\textwidth}
\begin{flushright}
\begin{tikzpicture}
  [bend angle=36,scale=2,auto,
pre/.style={<-,shorten <=1pt,semithick},
post/.style={->,shorten >=1pt,semithick}]
\node (G) at (-1,1) {$\Cu{G}$};
\node (B) at (1,1) {$\Cu{X}$}
edge [pre] node[auto,swap] {$\beta$} (G);
\node (H) at (0,0) {$\Cu{H}$}
edge [pre] node[auto,swap] {$\pi$} (G)
edge [post] node[auto] {$\tilde{\beta}$} (B);
\node (K) at (-1,-1) {$\Cu{K}$}
edge [pre] node[auto,swap] {$\pi'$} (G)
edge [post, bend right] node[auto,swap] {$\tilde{\beta'}$} (B)
edge [post] node[auto] {$\varphi$} (H);
 \end{tikzpicture}\vskip1em\centering\hypertarget{diagram}{Diagram 1: Morphisms in $\mathcal{C}_{\beta}$}
 \end{flushright}
\end{minipage}

The object we are interested in is the initial object of the category $\mathcal{C}_{\beta}$. This object is, classically, the closed subgroup generated by $\XX$ and thus we will denote this initial object by $\overline{\langle\XX\rangle}$. We will use the universal property as follows. Let $\beta\colon\Cu{G}\to\Cu{X}, b\colon\Cu{H}\to\Cu{Y}$ be two morphisms and let
\[\Cu{G}\xrightarrow{\pi_{\overline{\langle\XX\rangle}}}C^u\left(\overline{\langle\XX\rangle}\right)\xrightarrow{\tilde{\beta}}\Cu{X}\quad\text{and}\quad\Cu{H}\xrightarrow{\pi_{\overline{\langle\YY\rangle}}}C^u\left(\overline{\langle\YY\rangle}\right)\xrightarrow{\tilde{b}}\Cu{Y}\]

\noindent\begin{minipage}{0.34\textwidth}
be the Hopf image factorisations of the morphisms $\beta, b$, respectively. Furthermore, assume that $\HH\subset\GG$ and $\YY\subset\XX$, i.e. there exists maps $\pi_{\HH}$ and $\pi_{\YY}$ such that the diagram on the right commutes. 

\indent The universal property of the Hopf image $\overline{\langle\XX\rangle}$ yields the existence of the map $q$ in the diagram on the right. We summarise this observation as:
\end{minipage}
\begin{minipage}{0.65\textwidth}
\begin{center} \begin{tikzpicture}
  [bend angle=36,scale=2,auto,
pre/.style={<<-,shorten <=1pt,semithick},
post/.style={->>,shorten >=1pt,semithick}]
\node (G) at (-1.5,1.5) {$\Cu{G}$};
\node (X) at (1.5,1.5) {$\Cu{X}$}
edge [pre] node[auto,swap] {$\beta$} (G);
\node (<X>) at (0,0.6) {$C^u\left(\overline{\langle\XX\rangle}\right)$}
edge [pre] node[auto,swap] {$\pi_{\overline{\langle\XX\rangle}}$} (G)
edge [post] node[auto] {$\tilde{\beta}$} (X);
\node (<Y>) at (0,-0.1) {$C^u\left(\overline{\langle\YY\rangle}\right)$}
edge [dashed,pre] node[auto,swap] {$q$} (<X>);
\node (H) at (-1.5,-1) {$\Cu{H}$}
edge [pre] node[auto,swap] {$\pi_{\HH}$} (G)
edge [post] node[auto] {$\pi_{\overline{\langle\YY\rangle}}$} (<Y>);
\node (Y) at (1.5,-1) {$\Cu{Y}$}
edge [pre] node[auto,swap] {$b$} (H)
edge [pre] node[auto,swap] {$\tilde{b}$} (<Y>)
edge [pre] node[auto,swap] {$\pi_{\YY}$} (X);
\end{tikzpicture}\end{center}
\end{minipage}

\begin{fact}\label{fact:universal}
 If $\YY\subset\XX$, $\YY\subset\HH$, $\XX\subset\GG$ and $\HH\subset\GG$ is a closed quantum subgroup, then $\overline{\langle\YY\rangle}\subset\overline{\langle\XX\rangle}$.
\end{fact}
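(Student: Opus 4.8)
The plan is to construct the dashed arrow $q\colon C^u(\overline{\langle\XX\rangle})\to C^u(\overline{\langle\YY\rangle})$ and to check that it is a surjective, coproduct-intertwining unital ${}^*$-homomorphism; by \autoref{df:subgroup} such a $q$ is precisely an embedding $\overline{\langle\YY\rangle}\subset\overline{\langle\XX\rangle}$, which is the assertion.

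First I would form the composite $\rho:=\pi_{\overline{\langle\YY\rangle}}\circ\pi_{\HH}\colon C^u(\GG)\to C^u(\overline{\langle\YY\rangle})$. As a composite of two surjective coproduct-intertwining maps it is again one, so $\rho$ already realises $\overline{\langle\YY\rangle}$ as a closed quantum subgroup of $\GG$. The whole problem then reduces to showing that $\rho$ factors through the Hopf-image quotient $\pi_{\overline{\langle\XX\rangle}}$, i.e.\ that $\ker\pi_{\overline{\langle\XX\rangle}}\subseteq\ker\rho$. Once this containment is in place, $\rho$ descends to a unique unital ${}^*$-homomorphism $q$ with $q\circ\pi_{\overline{\langle\XX\rangle}}=\rho$; surjectivity of $q$ is inherited from $\rho$, and since $\pi_{\overline{\langle\XX\rangle}}$ is onto, the identity $(q\otimes q)\circ\Delta=\Delta\circ q$ follows from the analogous identities for $\rho$ and $\pi_{\overline{\langle\XX\rangle}}$ in the usual way.

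For the kernel containment I would invoke the minimality built into the Hopf image. Descending to the dense Hopf ${}^*$-algebras, the construction of \cite{BB10,SS16} identifies $\Pol(\overline{\langle\XX\rangle})=\Pol(\GG)/I_\beta$ and $\Pol(\overline{\langle\YY\rangle})=\Pol(\HH)/I_b$, where $I_\beta$ and $I_b$ are the largest Hopf ${}^*$-ideals contained in $\ker\beta$ and $\ker b$ respectively. The commutativity of the outer square, $b\circ\pi_{\HH}=\pi_{\YY}\circ\beta$, gives $\beta(x)=0\Rightarrow b(\pi_{\HH}(x))=0$, so $I_\beta\subseteq\ker\beta\subseteq\pi_{\HH}^{-1}(\ker b)$ and hence $\pi_{\HH}(I_\beta)\subseteq\ker b$. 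Because $\pi_{\HH}$ is a surjective morphism of Hopf ${}^*$-algebras, $\pi_{\HH}(I_\beta)$ is again a Hopf ${}^*$-ideal, so the maximality of $I_b$ forces $\pi_{\HH}(I_\beta)\subseteq I_b$, that is $I_\beta\subseteq\pi_{\HH}^{-1}(I_b)=\ker\rho$. This is exactly $\ker\pi_{\overline{\langle\XX\rangle}}\subseteq\ker\rho$; cancelling the epimorphism $\pi_{\overline{\langle\XX\rangle}}$ in $\tilde b\circ q\circ\pi_{\overline{\langle\XX\rangle}}=\tilde b\circ\rho=b\circ\pi_{\HH}=\pi_{\YY}\circ\beta=\pi_{\YY}\circ\tilde\beta\circ\pi_{\overline{\langle\XX\rangle}}$ further shows $\tilde b\circ q=\pi_{\YY}\circ\tilde\beta$, so the whole diagram commutes.

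I expect the only real obstacle to be this ideal bookkeeping: one must verify carefully that $\pi_{\HH}(I_\beta)$ is honestly a Hopf ${}^*$-ideal sitting inside $\ker b$, and one must run the comparison at the purely algebraic level of $\Pol$ before re-completing, since ``largest Hopf ideal in the kernel'' is an algebraic notion and the passage between $\Pol$ and its universal $C^*$-completion requires the usual care. Everything else --- that $\rho$ is a quantum-group quotient and that $q$ intertwines the coproducts --- is then a routine diagram chase.
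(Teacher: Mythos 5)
Your proposal is correct, but it is worth noting that the paper itself disposes of this Fact in one line: it simply observes that $\pi_{\mathds{Y}}\circ\beta=b\circ\pi_{\mathds{H}}$ exhibits $\bigl(\pi_{\overline{\langle\mathds{X}\rangle}},\overline{\langle\mathds{X}\rangle},\pi_{\mathds{Y}}\circ\tilde{\beta}\bigr)$ as an object of the relevant category and invokes initiality of the Hopf image to produce $q$. You instead unpack the universal property through the concrete description of the Hopf image from \cite{BB10,SS16}, namely $\Pol(\overline{\langle\mathds{X}\rangle})=\Pol(\mathds{G})/I_\beta$ with $I_\beta$ the largest Hopf ${}^*$-ideal inside $\ker\beta$, and you verify the kernel inclusion $I_\beta\subseteq\pi_{\mathds{H}}^{-1}(I_b)=\ker\rho$ by pushing $I_\beta$ forward along the surjection $\pi_{\mathds{H}}$ and appealing to maximality of $I_b$. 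Both arguments are sound; yours is longer but has a genuine advantage: the paper's one-liner tacitly uses that the Hopf image of $b\circ\pi_{\mathds{H}}$ (a map out of $C^u(\mathds{G})$) coincides with $\overline{\langle\mathds{Y}\rangle}$ computed from $b$ (a map out of $C^u(\mathds{H})$), a transitivity statement that is standard but not spelled out, whereas your identification $\ker\rho=\pi_{\mathds{H}}^{-1}(I_b)$ together with the observation that $\pi_{\mathds{H}}^{-1}(I_b)$ is again a Hopf ${}^*$-ideal proves exactly this compatibility. Your cautionary remarks are also well placed: the ideal bookkeeping must be done at the level of $\Pol$, and the resulting surjective Hopf ${}^*$-algebra morphism lifts to the universal $C^*$-completions precisely because the paper has fixed the convention of always working with $C^u$. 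In short: same underlying mechanism, with the paper arguing abstractly by initiality and you arguing concretely by ideals; no gap in either.
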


Note that the role of $\HH$ could be equally well played by $\overline{\langle\YY\rangle}$ in the above diagram (leading to minor simplifications). The reason we present it in this way is to emphasise that the outer arrows of morphisms of the diagram are the ones one can work with directly (by means of formulas), and the inner arrows of morphisms are the ones whose existence follows from universal property and the formulas need not be grasped.

\subsection{The Quantum Permutation Group \texorpdfstring{$S_n^+$}{} and Quantum Increasing Sequences \texorpdfstring{$I_{k,n}^+$}{}}\label{sec:qpg-qis}
Quantum permutation groups $S_n^+$ were introduced in \cite{Wang98} (cf. \cite[Section 3]{SS16}). Consider the universal $C^*$-algebra generated by $n^2$-elements $u_{ij}$, $1\leq i,j\leq n$ subject to the following relations:
\begin{enumerate}
 \item the generators $u_{ij}$ are all projections.
 \item $\sum_{i=1}^nu_{ij}=\mathds{1}=\sum_{j=1}^nu_{ij}$.
\end{enumerate}
This $C^*$-algebra will be denoted $\CuS{n}$. The matrix $u=[u_{ij}]_{1\leq i,j\leq n}$ is a fundamental corepresentation of $\CuS{n}$, this gives all the quantum group-theoretic data. Moreover, $S_n^+=S_n$ for $n\leq 3$ and $S_n^+\supsetneq S_n$ for $n\geq4$.

The algebra of continuous functions on the set of quantum increasing sequences was defined by Curran in \cite[Definition 2.1]{Cur11}. Let $k\leq n\in\mathds{N}$ and let $C^u(I^+_{k,n})$ be the universal $C^{\ast}$-algebra generated by $p_{ij}$, $1\leq i\leq n$, $1\leq j\leq k$ subject to the following relations:
\begin{enumerate}
 \item the generators $p_{ij}$ are all projections.
 \item each column of the rectangular matrix $p=[p_{ij}]_{1\leq i\leq n,1\leq j\leq k}$ forms a partition of unity: \(\sum_{i=1}^n p_{ij}=\mathds{1}\) for each \(1\leq j\leq k\).
 \item increasing sequence condition: \(p_{ij}p_{i'j'}=0\) whenever $j<j'$ and $i\geq i'$.
\end{enumerate}
This definition is obtained by the liberation philosophy (see \cite{BS09}): if one denotes by $I_{k,n}$ the set of increasing sequences of length $k$ and values in $\{1,\ldots,n\}$, then it is possible to write a matrix representation: to an increasing sequence $\underline{i}=(i_1<\ldots<i_k)$ one associates its matrix representation $A(\underline{i})\in M_{n\times k}(\{0,1\})$ as follows: $A(\underline{i})_{i_l,l}=1$ and all other entries are set to be $0$. One can check that the space of continuous functions on these matrices $C(\{A(\underline{i}):\underline{i}\in I_{k,n}\})$ is generated by the coordinate functions $x_{i,j}$ subject to the relations introduced above \textbf{and} the commutation relation (cf. the discussion after \cite[Remark 2.2]{Cur11}).

Curran defined also a ${}^{\ast}$-homomorphism $\beta_{k,n}\colon C(S_n^+)\to C(I^+_{k,n})$ (\cite[Proposition 2.5]{Cur11}) by:
\begin{itemize}
 \item $u_{ij}\mapsto p_{ij}$ for $1\leq i \leq n$, $1\leq j\leq k$,
 \item $u_{i\,k+m}\mapsto 0$ for $1\leq m\leq n-k$ and $i<m$ or $i>m+k$,
 \item for $1\leq m\leq n-k$ and $0\leq p\leq k$, \[ u_{m+p\,k+m}\mapsto \sum_{i=0}^{m+p-1} p_{ip}-p_{i+1\,p+1},\]
 where we set $p_{00}=\mathds{1}$, $p_{i0}=p_{0i}=p_{i\,k+1}=0$ for $i\geq1$.
\end{itemize}
This ${}^{\ast}$-homomorphism is well defined thanks to the additional relations obtained in \cite[Proposition 2.4]{Cur11} and the universal property of $\CuS{n}$. The map $\beta_{k,n}$ are defined in such a way that when applied to the commutative $C^*$-algebras $C(S_n)\to C(I_{k,n})$ (which satisfy the same relations plus commutativity), it is precisely the ``completing an increasing sequence to a permutation'' map. More precisely, one draws the diagram of an increasing sequence $\underline{i}=(i_1<\ldots<i_k)$ in the following way: drawing $k$ dots in one row and additional $n$ dots in the row below, one connects $l$-th dot in the upper row to the $i_l$-th dot in the lower row. Then one draws additional $n-k$ dots in the upper row next to previously drawn $k$ dots and connects them as follows: $(k+j)$-th dot is connected to the $j$-th leftmost non-connected dot in the bottom row. Finally, one obtains the diagram of a permutation on $n$ letters, which is then called $\beta_{k,n}(\underline{i})$ (for the version of $\beta_{k,n}$ as a map between appropriate commutative $C^*$-algebras), see the example below.
  
 \begin{figure}[h]
 \begin{tikzpicture}[scale=.8]
\foreach \i in {2,...,10} {
\path (\i,0) coordinate (P\i);}
\foreach \i in {2,...,10} {
\path (\i,1) coordinate (Q\i);}

\foreach \i in {7,...,10} \draw (Q\i) circle (3pt);

\draw[thick,dashed] (Q7) -- (P2);
\draw[thick,dashed] (Q8) -- (P5);
\draw[thick,dashed] (Q9) -- (P8);
\draw[thick,dashed] (Q10) -- (P10);

\draw[thick] (Q2) -- (P3);
\draw[thick] (Q3) -- (P4);
\draw[thick] (Q4) -- (P6);
\draw[thick] (Q5) -- (P7);
\draw[thick] (Q6) -- (P9);

\foreach \i in {2,...,10} \fill (P\i) circle (3pt);
\foreach \i in {2,...,6}  \fill (Q\i) circle (3pt);
\node[text width=0.9\textwidth,anchor=west] at (-2.5,-1) {\footnotesize{\textsc{FIGURE}. The sequence $(2\!<\!3\!<\!5\!<\!6\!<\!8)\in I_{5,9}$, drawn with full circles and segments, is completed, with the aid of empty circles and dashed segments, to the permutation $(1,2,3,5,8,7,4,6)$.}};
 \end{tikzpicture}\end{figure}

\begin{lemma}\label{lem:IS_gen}
Consider $I_{k,n}\subseteq S_n$ via the above map. Then:
\begin{enumerate}
\item $\langle I_{k,n}\rangle=S_n$ for all $n$ and all $k\neq0,n$,
\item $S_n\subset \overline{\langle I_{k,n}^+\rangle}$ for $k,n\neq0,n$. 
\end{enumerate}
\end{lemma}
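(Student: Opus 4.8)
The first claim is purely classical, so I would begin by unwinding $\beta_{k,n}$ to pin down $I_{k,n}$ inside $S_n$. Tracking the diagram that defines $\beta_{k,n}$, the permutation $\sigma=\beta_{k,n}(\underline i)$ attached to $\underline i=(i_1<\cdots<i_k)$ is $\sigma(l)=i_l$ for $1\le l\le k$, while $\sigma(k+j)$ is the $j$-th smallest element of the complement $\{1,\dots,n\}\setminus\{i_1,\dots,i_k\}$. Equivalently, $I_{k,n}$ is exactly the set of $(k,n-k)$-shuffles, i.e.\ the permutations increasing on $\{1,\dots,k\}$ and on $\{k+1,\dots,n\}$; the only possible descent sits at position $k$, and $|I_{k,n}|=\binom nk$.

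To prove $\langle I_{k,n}\rangle=S_n$ I would exhibit enough transpositions. First, $s_k=(k,k+1)$ is itself a shuffle, coming from $\underline i=(1,\dots,k-1,k+1)$. Next, for $1\le m\le n-k$ the sequence $\underline i=(1,\dots,k-1,k+m)$ gives the shuffle cycle $\gamma_m=(k,k+m,k+m-1,\dots,k+1)$, and a direct computation yields $\gamma_m s_k\gamma_m^{-1}=(k,k+m)$; hence $\langle I_{k,n}\rangle$ contains every $(k,j)$ with $k<j\le n$, and these generate the symmetric group on $\{k,k+1,\dots,n\}$. Symmetrically, for $1\le j\le k$ the increasing sequence with underlying set $\{1,\dots,k+1\}\setminus\{j\}$ produces a shuffle $\delta_j$ with $\delta_j(k)=k+1$ and $\delta_j(k+1)=j$, so $\delta_j s_k\delta_j^{-1}=(k+1,j)$, and these generate the symmetric group on $\{1,\dots,k+1\}$. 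Since the two symmetric groups share the points $k$ and $k+1$ and their supports cover $\{1,\dots,n\}$, together they generate all of $S_n$. A quick check of the two ranges confirms the construction is available exactly for $1\le k\le n-1$, i.e.\ $k\ne 0,n$. The only labour here is the bookkeeping of which increasing sequence yields which permutation together with the conjugation identities; there is no conceptual obstacle.

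For the second claim I would deduce everything from \autoref{fact:universal} together with the first part. The plan is to instantiate the four inclusions with $\GG=S_n^+$ and $\XX=I^+_{k,n}$ (the quantum space over $S_n^+$ via the Curran map $\beta_{k,n}\colon C(S_n^+)\to C(I^+_{k,n})$), $\YY=I_{k,n}$ and $\HH=S_n$. Here $S_n\subset S_n^+$ is the canonical inclusion coming from the abelianisation $C(S_n^+)\twoheadrightarrow C(S_n)$; $I_{k,n}\subset I^+_{k,n}$ is the commutative quotient $C(I^+_{k,n})\twoheadrightarrow C(I_{k,n})$ from the liberation description recalled above; and $I_{k,n}\subset S_n$ is the classical Curran embedding of the first part. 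Since the abelianised copy of $\beta_{k,n}$ is the common factorisation $C(S_n^+)\to C(I_{k,n})$, the four maps fit the commuting pattern, and \autoref{fact:universal} gives $\overline{\langle I_{k,n}\rangle}\subset\overline{\langle I^+_{k,n}\rangle}$.

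It remains to identify $\overline{\langle I_{k,n}\rangle}$, computed inside $S_n^+$, with $S_n$. Because the target $C(I_{k,n})$ is commutative, the map $C(S_n^+)\to C(I_{k,n})$ factors through $C(S_n)$, so the Hopf image is a classical group; in the classical world the Hopf image is merely the subgroup generated by the points, which by the first part is $\langle I_{k,n}\rangle=S_n$. Combining this with the inclusion above yields $S_n=\overline{\langle I_{k,n}\rangle}\subset\overline{\langle I^+_{k,n}\rangle}$, as desired. I expect the only delicate point to be the careful verification that all four maps commute as required by \autoref{fact:universal}; once that is settled the statement is immediate.
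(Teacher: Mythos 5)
Your proof is correct and follows essentially the same route as the paper: for part (1) you exhibit explicit elements of $I_{k,n}$ that generate overlapping symmetric groups on $\{1,\dots,k+1\}$ and $\{k,\dots,n\}$ (the paper uses an adjacent transposition plus long cycles where you conjugate to obtain star transpositions --- a cosmetic difference), and for part (2) you instantiate \autoref{fact:universal} with the abelianisation maps and the classical identification of the Hopf image, which is exactly what the paper's one-line appeal to that fact amounts to. No gaps.
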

\begin{proof}
 The first item is routine: by considering increasing sequences such that $a_l=l$ for $l<k$, and $a_k=k+p$ for $p\in\{1,n-k\}$, we obtain the transposition $(k,k+1)$ and a the cycle $(k,k+1,\ldots,n)$, which generate $S_{n-k+1}$ operating on last indices. Then by considering the sequence $a_l=l+1$ for all $l\leq k$, we obtain the cycle $(1,2,\ldots,k+1)$, together with the transposition $(k,k+1)$ they generate $S_{k+1}$ operating on first $k+1$ indices. Consequently, we have generated a subgroup of $S_n$ containing $S_{k+1}$ operating on $\{1,\ldots,k+1\}$ and $S_{n-k+1}$ operating on $\{k,k+1,\ldots,n\}$, the two generate whole $S_n$.

 The second item follows easily from \autoref{fact:universal}.
\end{proof}
\section{Quantum Increasing Sequences generate Quantum Permutation Groups}\label{sec:main}
\begin{lemma}\label{lem:ext_range}
 Fix $n\in\mathds{N}$ and $1\leq k \leq n-1$. Let $(p_{ij})_{1\leq i\leq n, 1\leq j \leq k}$, $(\tilde{p}_{ij})_{1\leq i\leq n-1, 1\leq j \leq k}$ and $(\dot{p}_{ij})_{1\leq i\leq n-1, 1\leq j \leq k-1}$ denote the standard generators of $C^u(I_{k,n}^+)$, $C^u(I_{k,n-1}^+)$ and $C^u(I_{k-1,n-1}^+)$, respectively. The maps $\tilde{\eta}_{k,n}\colon C^u(I_{k,n}^+)\to C^u(I_{k,n-1}^+)$ and $\dot{\eta}_{k,n}\colon C^u(I_{k,n}^+)\to C^u(I_{k-1,n-1}^+)$ determined by:

 \noindent
\begin{minipage}{.54\textwidth}
\begin{flushright}
 \[\dot{\eta}_{k,n}(p_{ij})=\left\{\begin{array}{rl}
           \mathds{1}&\quad i=j=1\\
           0&\quad i=1\text{ or }j=1\text{ and }i\neq j\\
           \dot{p}_{i-1\,j-1} &\quad 1<i\leq n, 1< j\leq k
          \end{array}\right.\]
\end{flushright}
\end{minipage}
\begin{minipage}{.46\textwidth}
\begin{flushleft}
\[\tilde{\eta}_{k,n}(p_{ij})=\left\{\begin{array}{rl}
           \tilde{p}_{ij}&\quad i<n\\
           0&\quad i=n
          \end{array}\right.\]
\end{flushleft}
\end{minipage}
are well-defined surjective ${}^*$-homomorphisms.
\end{lemma}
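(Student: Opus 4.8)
The plan is to invoke the universal property of $C^u(I_{k,n}^+)$: since this algebra is the universal $C^*$-algebra generated by the $p_{ij}$ subject to the three relations (projections; column sums equal to $\mathds{1}$; the increasing-sequence condition $p_{ij}p_{i'j'}=0$ for $j<j'$, $i\ge i'$), a unital ${}^*$-homomorphism out of it exists and is uniquely determined by prescribing the images of the generators, \emph{provided} those images---viewed as concrete elements of the target algebra---again satisfy relations (1)--(3). So for each of $\tilde\eta_{k,n}$ and $\dot\eta_{k,n}$ I would simply verify the three defining relations for the proposed images, and then read off surjectivity from the fact that the images exhaust a generating set of the target.

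For $\tilde\eta_{k,n}$ the verification is essentially immediate: the map replaces the last row of $p$ by zeros and carries the rest to the generators $\tilde p_{ij}$ of $C^u(I_{k,n-1}^+)$. Relation (1) holds since $0$ and each $\tilde p_{ij}$ are projections; relation (2) holds because $\sum_{i=1}^n\tilde\eta_{k,n}(p_{ij})=\sum_{i=1}^{n-1}\tilde p_{ij}+0=\mathds{1}$ by the column-sum relation in the target; and relation (3) holds because any product $\tilde\eta_{k,n}(p_{ij})\tilde\eta_{k,n}(p_{i'j'})$ with $j<j'$, $i\ge i'$ either has a factor from the last row (hence vanishes) or reduces to $\tilde p_{ij}\tilde p_{i'j'}=0$ by relation (3) in the target. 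Surjectivity is clear since the image contains all the $\tilde p_{ij}$.

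The substantive case is $\dot\eta_{k,n}$, which deletes the first row and first column, placing $\mathds{1}$ at the corner $(1,1)$ and $0$ on the remainder of the first row and column, while shifting the surviving block via $(i,j)\mapsto(i-1,j-1)$ onto the generators $\dot p_{i-1\,j-1}$ of $C^u(I_{k-1,n-1}^+)$. Relation (1) is again routine. For relation (2) I would split on the column index: for $j=1$ the sum collapses to the single corner term $\mathds{1}$, all other entries of the first column mapping to $0$; for $1<j\le k$ the first-row term vanishes and the remainder is $\sum_{i=2}^n\dot p_{i-1\,j-1}=\sum_{a=1}^{n-1}\dot p_{a\,j-1}=\mathds{1}$, which is exactly a column-sum relation in $C^u(I_{k-1,n-1}^+)$ (the index ranges match: $n-1$ rows and $k-1$ columns).

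The main obstacle---and the only step requiring genuine care---is relation (3) for $\dot\eta_{k,n}$, owing to the boundary behaviour on the first row and column. Given a pair $(i,j),(i',j')$ with $j<j'$ and $i\ge i'$, note first that $j'>1$. If $i'=1$ then $\dot\eta_{k,n}(p_{i'j'})=0$ (first row, $j'>1$) and the product vanishes. Otherwise $i'>1$, hence $i>1$ as well; if now $j=1$ then $\dot\eta_{k,n}(p_{ij})=0$ (first column, $i>1$), while if $j>1$ both images lie in the shifted block and the inequalities $i-1\ge i'-1$, $j-1<j'-1$ trigger relation (3) in $C^u(I_{k-1,n-1}^+)$, giving $\dot p_{i-1\,j-1}\dot p_{i'-1\,j'-1}=0$. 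Thus every such product vanishes, and $\dot\eta_{k,n}$ is a well-defined ${}^*$-homomorphism. Surjectivity then follows since its image contains every $\dot p_{a\,b}$ with $1\le a\le n-1$, $1\le b\le k-1$, a generating set of $C^u(I_{k-1,n-1}^+)$.
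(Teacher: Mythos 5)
Your proposal is correct and follows essentially the same route as the paper: invoke the universal property, verify the three defining relations for the proposed images (with the same case analysis on the boundary row/column for $\dot\eta_{k,n}$), and read off surjectivity from the images containing the generators of the target.
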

\begin{proof}
In both cases the existence of the ${}^*$-homomorphisms is a consequence of the universal property of $C^*(I_{k,n}^+)$, as long as relations satisfied by $p_{ij}$ are satisfied by their images.

Clearly, $\tilde{\eta}_{k,n}(p_{ij})$ are all projections, which add up to $\mathds{1}$ in each column. The increasing sequence condition amounts to verifying if \[\forall_{j<j'}\forall_{i\geq i'}\quad \tilde{\eta}_{k,n}(p_{ij})\tilde{\eta}_{k,n}(p_{i'j'})=0.\]
 If $i=n$, this is obvious, as $\tilde{\eta}_{k,n}(p_{ij})=0$ from the definition. Otherwise, $n>i\geq i'$ and consequently \[\tilde{\eta}_{k,n}(p_{ij})\tilde{\eta}_{k,n}(p_{i'j'})=\tilde{p}_{ij}\tilde{p}_{i'j'}=0\] thanks to increasing sequence condition satisfied by $\tilde{p}_{ij}$'s in $C^u(I_{k,n-1}^+)$.

Likewise, $\dot{\eta}_{k,n}(p_{ij})$ are all projections, which add up to $\mathds{1}$ in each column, and $\dot{\eta}_{k,n}(p_{ij})\dot{\eta}_{k,n}(p_{i'j'})$ satisfy the increasing sequence condition for $n\geq i\geq i'>1$ and $1<j<j'$ due to relations in $C^u(I_{k-1,n-1}^+)$. If $i'=1$, then $1\leq j< j'$ and consequently $\dot{\eta}_{k,n}(p_{i'j'})=0$ and the increasing sequence condition follows. Similarly, if $j=1$ and $i\geq i'>1$, then $\dot{\eta}_{k,n}(p_{ij})=0$ and the increasing sequence conditions follows.\end{proof}
\begin{lemma}\label{lem:ext_range2}
 Fix $n\in\mathds{N}$ and $1\leq k \leq n-1$ and let $\tilde{\eta}_{k,n}, \dot{\eta}_{k,n}$ be the maps from \autoref{lem:ext_range}. Let $(u_{ij})_{1\leq i,j\leq n}$ and $(u_{ij}')_{1\leq i,j\leq n-1}$ be the standard generators of $\CuS{n}$ and $\CuS{n-1}$, respectively, and let $q_n, \bar{q}_n\colon\CuS{n}\to\CuS{n-1}$ be the maps 
 
 \noindent
\begin{minipage}{.5\textwidth}
\begin{flushleft}
\[q_n(u_{ij})=\left\{\begin{array}{rl}
           u_{ij}'&\quad i,j<n\\
           \mathds{1}&\quad i=j=n\\
           0&\quad i=n, j<n\text{ or }i<n,j=n
          \end{array}\right.\]
(1) The diagram below is commutative:
  \begin{tikzpicture}
  [bend angle=36,scale=2,auto,
pre/.style={<-,shorten <=1pt,semithick},
post/.style={->,shorten >=1pt,semithick}]
\node (Sn) at (-1,1) {$\CuS{n}$};
\node (Ikn) at (1,1) {$C^u(I_{k,n}^+)$}
edge [pre] node[auto,swap] {$\beta_{k,n}$} (Sn);
\node (Ikn-1) at (1,-0.5) {$C^u(I_{k,n-1}^+)$}
edge [pre] node[auto,swap] {$\tilde{\eta}_{k,n}$} (Ikn);
\node (Sn-1) at (-1,-0.5) {$\CuS{n-1}$}
edge [pre] node[auto,swap] {$q_n$} (Sn)
edge [post] node[auto,swap] {$\beta_{k,n-1}$} (Ikn-1);
 \end{tikzpicture} 
\end{flushleft}
\end{minipage}
\begin{minipage}{.5\textwidth}
\begin{flushright}
\[\bar{q}_n(u_{ij})=\left\{\begin{array}{rl}
           u_{i-1\,j-1}'&\quad i,j>1\\
           \mathds{1}&\quad i=j=1 \\
           0&\quad i=1, j>1\text{ or }i>1,j=1
          \end{array}\right.\]
 (2) The diagram below is commutative:
  \begin{tikzpicture}
  [bend angle=36,scale=2,auto,
pre/.style={<-,shorten <=1pt,semithick},
post/.style={->,shorten >=1pt,semithick}]
\node (Sn) at (-1,1) {$\CuS{n}$};
\node (Ikn) at (1,1) {$C^u(I_{k,n}^+)$}
edge [pre] node[auto,swap] {$\beta_{k,n}$} (Sn);
\node (Ik-1n-1) at (1,-0.5) {$C^u(I_{k-1,n-1}^+)$}
edge [pre] node[auto,swap] {$\dot{\eta}_{k,n}$} (Ikn);
\node (Sn-1) at (-1,-0.5) {$\CuS{n-1}$}
edge [pre] node[auto,swap] {$\bar{q}_n$} (Sn)
edge [post] node[auto,swap] {$\beta_{k-1,n-1}$} (Ik-1n-1);
 \end{tikzpicture}
\end{flushright}
\end{minipage}
\end{lemma}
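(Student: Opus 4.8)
The plan is to verify both squares on the generators $u_{ij}$ of $\CuS{n}$: since all four arrows are unital ${}^*$-homomorphisms and the $u_{ij}$ generate $\CuS{n}$ as a ${}^*$-algebra, it suffices to check that the two composites agree on each $u_{ij}$. Equivalently, writing $u=[u_{ij}]$ for the fundamental corepresentation, I would compute the two $n\times n$ matrices $\tilde{\eta}_{k,n}(\beta_{k,n}(u))$ and $\beta_{k,n-1}(q_n(u))$ (respectively the analogues with $\dot{\eta}_{k,n},\bar{q}_n,\beta_{k-1,n-1}$) over $C^u(I_{k,n-1}^+)$ (respectively $C^u(I_{k-1,n-1}^+)$) and compare them entrywise. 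Throughout I would use the structure of Curran's map: the first $k$ columns are sent to the generators $p_{ij}$, while the last $n-k$ columns are given by the telescoping formula $u_{m+p\,k+m}\mapsto\sum_{i=0}^{m+p-1}(p_{ip}-p_{i+1\,p+1})$, together with the boundary conventions $p_{00}=\mathds{1}$ and $p_{i0}=p_{0i}=p_{i\,k+1}=0$.

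For the routine part I would organise the computation by columns. On the first $k$ columns ($j\le k$) both composites reduce at once to the defining substitutions $p_{ij}\mapsto\tilde{p}_{ij}$ (respectively $\dot{p}_{i-1\,j-1}$) together with the prescribed behaviour in the row $i=n$ (respectively $i=1$), and these match by inspection. On the ``extra'' columns $k+1\le j\le n-1$ of square (1), $q_n$ sends $u_{i\,k+m}$ with $i<n$ to $u'_{i\,k+m}$ and with $i=n$ to $0$; since $n$ lies outside the support $m\le i\le m+k$ of that column of $\beta_{k,n}$ (as $m\le n-k-1$ here), the left composite vanishes at $i=n$ as well, while for $i<n$ applying $\tilde{\eta}_{k,n}$ to Curran's formula meets only indices $i'<n$, so it simply replaces every $p$ by $\tilde{p}$ and reproduces Curran's formula for $\beta_{k,n-1}$ verbatim. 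For square (2) the map $\bar{q}_n$ \emph{shifts} the row and column indices down by one instead of annihilating them, and $\dot{\eta}_{k,n}$ realises exactly the same shift on Curran's formula (with $p_{1\,\cdot}$ and $p_{\cdot\,1}$ going to the boundary zeros and $p_{11}\mapsto\mathds{1}$); I would check that after re-indexing the two telescoping sums differ only by the boundary term $\dot{p}_{0p}=0$, hence agree. None of these verifications use anything beyond the three defining relations and the boundary conventions.

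The main obstacle is the last column $j=n$ of square (1). There $q_n$ collapses the column entirely, sending $u_{in}$ to $0$ for $i<n$ and to $\mathds{1}$ for $i=n$, so the right-hand composite is $0$ or $\mathds{1}$; yet $\tilde{\eta}_{k,n}(\beta_{k,n}(u_{in}))$ is a priori a genuinely nontrivial combination of the $\tilde{p}_{i'j}$. The difficulty is that applying $\tilde{\eta}_{k,n}$ to $\sum_{i=0}^{m+p-1}(p_{ip}-p_{i+1\,p+1})$ with $m=n-k$ kills the top row $i'=n$, destroying the clean telescoping, so the surviving sum must be shown to vanish (for $i<n$) or to equal $\mathds{1}$ (for $i=n$) by other means. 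To handle this I would first establish the auxiliary vanishing relation that in $C^u(I_{k,N}^+)$ one has $p_{ij}=0$ whenever $i>N-k+j$, equivalently $\sum_{i'=1}^{N-k+j}p_{i'j}=\mathds{1}$, so that each column sum is already saturated by its lowest rows. This follows from the increasing-sequence condition by a short downward induction on $j$: multiplying $p_{ij}$ by the column-$(j+1)$ partition of unity and using $p_{ij}p_{i'\,j+1}=0$ for $i\ge i'$ reduces the claim for $j$ to the claim for $j+1$. Granting this relation in $C^u(I_{k,n-1}^+)$ (with $N=n-1$), every sum produced in the last column telescopes to $\mathds{1}-\mathds{1}=0$ for $i<n$ and to $\mathds{1}$ for $i=n$, matching $\beta_{k,n-1}(q_n(u_{in}))$ and completing square (1). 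I expect this auxiliary relation, and the careful bookkeeping of the boundary rows it controls, to be the only genuinely non-formal step; square (2) needs no such input precisely because $\bar{q}_n$ shifts rather than annihilates, leaving Curran's formulas on the two sides aligned after the index shift.
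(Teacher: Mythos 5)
Your proposal is correct and follows essentially the same route as the paper: a generator-by-generator check organised by the cases of Curran's map, with the only non-formal input being the vanishing relation $p_{ij}=0$ for $i>N-k+j$ in $C^u(I^+_{k,N})$, which handles the collapsed last column of square (1). The paper simply cites this relation as \cite[Lemma 2.4]{Cur11} rather than re-deriving it by your (correct) downward induction, so the two arguments coincide in substance.
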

\begin{proof}
As in \autoref{lem:ext_range}, let $(p_{ij})_{1\leq i\leq n, 1\leq j \leq k}$, $(\tilde{p}_{ij})_{1\leq i\leq n-1, 1\leq j \leq k}$ and $(\dot{p}_{ij})_{1\leq i\leq n-1, 1\leq j \leq k-1}$ denote the standard generators of $C^u(I_{k,n}^+)$, $C^u(I_{k,n-1}^+)$ and $C^u(I_{k-1,n-1}^+)$, respectively. 

\noindent (1) One needs to check if $\tilde{\eta}_{k,n}(\beta_{k,n}(u_{ij})) = \beta_{k,n-1}(q_n(u_{ij}))$ for all $1\leq i,j\leq n$. As the Curran's map is defined by some special cases, we will analyse if this equality holds in each of the cases separately.
\begin{enumerate}[(a)]
 \item $1\leq i \leq n, 1\leq j \leq k$. 
 \[\tilde{\eta}_{k,n}(\beta_{k,n}(u_{ij})) = \tilde{\eta}_{k,n}(p_{ij}) = \left\{\begin{array}{rl} \tilde{p}_{ij}&\quad i<n \\ 0&\quad i=n \end{array}\right.\]
 On the other hand, \[\beta_{k,n-1}(q_n(u_{ij})) = \left\{\begin{array}{rll} \beta_{k,n-1}(u'_{ij}) =& \tilde{p}_{ij} &\quad i<n \\ \beta_{k,n-1}(0) =& 0 &\quad i=n \end{array}\right.\]
 \item $1\leq m \leq n-k, i<m\text{ or }i>m+k$. 
 \[\tilde{\eta}_{k,n}(\beta_{k,n}(u_{i\,k+m})) = \tilde{\eta}_{k,n}(0) = 0\]
 On the other hand, \[\beta_{k,n-1}(q_n(u_{i\,k+m})) = \left\{\begin{array}{rll} \beta_{k,n-1}(u'_{i\,k+m}) =& 0 &\quad \LARGE\substack{1\leq m \leq n-1 -k\\ i<m \text{ or } n-1\geq i>m+k} \\ \beta_{k,n-1}(0) =& 0 &\quad \LARGE\substack{m\leq n-1-k, i=n \\\text{ or }m=n-k} \end{array}\right.\]  
 \item $1\leq m \leq n-k, 0\leq p\leq k$. We start with the right hand side:
 \[
 \beta_{k,n-1}(q_n(u_{m+p\,k+m})) = \left\{\begin{array}{>{\displaystyle}r>{\displaystyle}ll}\beta_{k,n-1}(u'_{m+p\,k+m}) =& \sum_{i=0}^{m+p-1}(\tilde{p}_{ip}-\tilde{p}_{i+1\,p+1}) &\quad \LARGE\substack{m\leq n-1-k\\ 0\leq p\leq k} \\ \beta_{k,n-1}(\mathds{1}) =& \mathds{1} &\quad m=n-k, p=k \\ \beta_{k-1,n}(0) =& 0 &\quad m=n-k, p<k \end{array}\right.
 \]
 Whereas the left hand side:
 \begin{align*} \tilde{\eta}_{k,n}(\beta_{k,n}(u_{m+p\,k+m}))&=\sum_{i=0}^{m+p-1}\tilde{\eta}_{k,n}(p_{ip})-\tilde{\eta}_{k,n}(p_{i+1\,p+1}) \\
 &= \left\{\begin{array}{>{\displaystyle}r*2{>{\displaystyle}l}l}
	    \sum_{i=0}^{m+p-1}(\tilde{p}_{ip}-\tilde{p}_{i+1\,p+1}) & {} & {} & \quad \LARGE\substack{m\leq n-1-k\\ 0\leq p\leq k} \\
	    \sum_{i=0}^{n-2}\left(\tilde{p}_{ik}-\tilde{p}_{i+1\,k+1}\right) +\tilde{p}_{n-1\,k} &=\sum_{i=0}^{n-1}\tilde{p}_{ik}&=\mathds{1} &\quad\LARGE\substack{ m=n-k\\ p=k}\\
            \sum_{i=0}^{n-1-k}\tilde{p}_{i0}-\tilde{p}_{i+1\,1} &=\text{(i)}&=0 &\quad\LARGE\substack{ m=n-k\\ p=0}\\
            \sum_{i=0}^{n-k+p-1}\tilde{p}_{ip}-\tilde{p}_{i+1\,p+1}&=\text{(ii)} &=0 & \quad \LARGE\substack{m=n-k\\ 0<p<k}
           \end{array}\right. 
 \end{align*}
where in the case $m=n-k, p=k$ we used that $p_{i\,k+1}=0$. Further:
\begin{enumerate}[(i)]
\item $m=n-k, p=0$. By \cite[Lemma 2.4]{Cur11} $\tilde{p}_{i1}=0$ for $i>n-k$, thus \(\displaystyle\sum_{i=0}^{n-k}\tilde{p}_{i1}=\sum_{i=0}^{n-1}\tilde{p}_{i1}=\mathds{1}\). Consequently
\[\sum_{i=0}^{n-1-k}\tilde{p}_{i0}-\tilde{p}_{i+1\,1}=\mathds{1}-\sum_{i=1}^{n-k}\tilde{p}_{i1}=\mathds{1}-\mathds{1}=0\]
where we used that $p_{00}=\mathds{1}$ and $p_{i0}=0$.
\item $m=n-k, 0<p<k$. 
\[\sum_{i=0}^{n-k+p-1}\tilde{p}_{ip}-\tilde{p}_{i+1\,p+1}=\sum_{i=1}^{n-1-k+p}\tilde{p}_{ip}-\sum_{i=1}^{n-k+p}\tilde{p}_{i\,p+1}=\sum_{i=1}^{n-1}\tilde{p}_{ip}-\sum_{i=1}^{n-1}\tilde{p}_{i\,p+1}=\mathds{1}-\mathds{1}=0\]
as $\tilde{p}_{ip}=0$ for $i>n-k+p-1$ and $\tilde{p}_{i\,p+1}=0$ for $i> n-k+p$ by \cite[Lemma 2.4]{Cur11}. 
\end{enumerate}
\end{enumerate}

\noindent (2) One needs to check if $\dot{\eta}_{k,n}(\beta_{k,n}(u_{ij})) = \beta_{k-1,n-1}(\bar{q}_n(u_{ij}))$ for all $1\leq i,j\leq n$. 
\begin{enumerate}[(a)]
 \item $1\leq i \leq n, 1\leq j \leq k$. 
 \[\dot{\eta}_{k,n}(\beta_{k,n}(u_{ij})) = \dot{\eta}_{k,n}(p_{ij}) = \dot{\eta}_{k,n}(p_{ij})=\left\{\begin{array}{rl}
           \mathds{1}&\quad i=j=1\\
           0&\quad i=1\text{ or }j=1\text{ and }i\neq j\\
           \dot{p}_{i-1\,j-1} &\quad 1<i\leq n, 1< j\leq k
          \end{array}\right.\]
 On the other hand, \[\beta_{k-1,n-1}(\bar{q}_n(u_{ij})) = 
 \left\{\begin{array}{rll} 
 \beta_{k-1,n-1}(\mathds{1}) =& \mathds{1} &\quad i=j=1 \\ 
 \beta_{k-1,n-1}(0) =& 0 &\quad i=1\text{ or }j=1\text{ and }i\neq j\\
 \beta_{k-1,n-1}(u_{i-1\,j-1}') =& \dot{p}_{i-1\,j-1} &\quad 1<i\leq n, 1< j\leq k
 \end{array}\right.\]
 \item $1\leq m \leq n-k, i<m\text{ or }i>m+k$. 
 \[\dot{\eta}_{k,n}(\beta_{k,n}(u_{i\,k+m})) = \dot{\eta}_{k,n}(0) = 0\]
 On the other hand, 
 \[\beta_{k-1,n-1}(\bar{q}_n(u_{i\,k+m})) = 
 \left\{\begin{array}{rll} 
 \beta_{k-1,n-1}(0) =& 0 &\quad i=1\\
 \beta_{k-1,n-1}(u_{i-1\,k+m-1}') =& 0 &\quad\LARGE\substack{1\leq m \leq n-k\\ 2\leq i\leq m\text{ or }i>m+k}
 \end{array}\right.
 \]  
 Note that the bottom part of the above displayed equation covers also the case $i=m$, which will be necessary in the consecutive part. More precisely, for $2\leq m \leq n-k$, we have
 \[\beta_{k-1,n-1}(\bar{q}_n(u_{m\,k+m})) = \beta_{k-1,n-1}(u_{m-1\,k+m-1}') = 0.\]
 \item $1\leq m \leq n-k, 0\leq p\leq k$. We have
 \begin{align*} {}&\dot{\eta}_{k,n}(\beta_{k,n}(u_{m+p\,k+m}))=\sum_{i=0}^{m+p-1}\dot{\eta}_{k,n}(p_{ip})-\dot{\eta}_{k,n}(p_{i+1\,p+1}) \\
 &= \left\{\begin{array}{>{\displaystyle}r*2{>{\displaystyle}l}l}
	    \sum_{i=0}^{m-1}\dot{\eta}_{k,n}(p_{i0})-\dot{\eta}_{k,n}(p_{i+1\,1}) &= \dot{\eta}_{k,n}(p_{00})-\dot{\eta}_{k,n}(p_{1\,1}) &=\mathds{1}-\mathds{1}=0 & \quad p=0 \\
	    \sum_{i=0}^{m+p-1}\dot{\eta}_{k,n}(p_{i1})-\dot{\eta}_{k,n}(p_{i+1\,2}) &= \mathds{1} - \sum_{i=0}^{m+p-2}\dot{p}_{i+1\,1} &= \sum_{i=0}^{m+p-2}\dot{p}_{i\,0}-\dot{p}_{i+1\,1}& \quad p=1 \\
            \sum_{i=0}^{m+p-1}\dot{\eta}_{k,n}(p_{ip})-\dot{\eta}_{k,n}(p_{i+1\,p+1})&=\sum_{i=0}^{m+p-2}\dot{p}_{i\,p-1}-\dot{p}_{i+1\,p} &{} & \quad p\geq2
           \end{array}\right. 
 \end{align*}
 The case $p=0$ of the right hand side was covered at the end of item (b), we focus on $1\leq p\leq k$. Then $m+p\geq2$ and we have:
 \[ \beta_{k-1,n-1}(\bar{q}_n(u_{m+p\,k+m})) = \beta_{k-1,n-1}(u_{m+(p-1)\, (k-1)+m}) = \sum_{i=0}^{m+p-2}\dot{p}_{i\,p-1}-\dot{p}_{i+1\,p}\]
\end{enumerate}
This concludes the whole proof.
\end{proof}
\begin{theorem}
 If $n\geq4$ and $2\leq k \leq n-2$, then $\overline{\langle I_{k,n}^+\rangle} = S_n^+$.
\end{theorem}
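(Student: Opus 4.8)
The plan is to run an induction on $n$ that reduces the statement for $(k,n)$ to the same statement at level $n-1$, feeding the inclusions of smaller quantum increasing sequence spaces from \autoref{lem:ext_range} and \autoref{lem:ext_range2} into \autoref{fact:universal}, and then closing the loop with the generation theorem of \cite{BCF}. The base of the induction is $n=4$: the only admissible parameter is then $k=2$, and $\overline{\langle I_{2,4}^+\rangle}=S_4^+$ is exactly the result established in \cite{remark}.

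For the inductive step I fix $n\geq5$ and $2\leq k\leq n-2$, and assume the theorem holds at level $n-1$, that is $\overline{\langle I_{k',n-1}^+\rangle}=S_{n-1}^+$ for all $2\leq k'\leq n-3$. First, \autoref{lem:IS_gen}(2) hands me $S_n\subset\overline{\langle I_{k,n}^+\rangle}$ at once. The substance of the step is to also exhibit a copy of $S_{n-1}^+$ inside $\overline{\langle I_{k,n}^+\rangle}$, and here a case split on $k$ is forced by which inductive hypothesis is available. If $2\leq k\leq n-3$ I would invoke \autoref{fact:universal} with $\GG=S_n^+$ and $\XX=I_{k,n}^+$ (embedded via $\beta_{k,n}$), $\HH=S_{n-1}^+$ (embedded via $q_n$), and $\YY=I_{k,n-1}^+$ (sitting inside $\XX$ via $\tilde{\eta}_{k,n}$ and inside $\HH$ via $\beta_{k,n-1}$); the compatibility square $\tilde{\eta}_{k,n}\comp\beta_{k,n}=\beta_{k,n-1}\comp q_n$ demanded by the Fact is precisely \autoref{lem:ext_range2}(1). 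This delivers $\overline{\langle I_{k,n-1}^+\rangle}\subset\overline{\langle I_{k,n}^+\rangle}$, and since $k\leq n-3$ the inductive hypothesis identifies the smaller Hopf image with $S_{n-1}^+$. If instead $k=n-2$ --- whence $k\geq3$ because $n\geq5$ --- I would run the parallel argument along the other tower, now with $\YY=I_{k-1,n-1}^+$, $\pi_{\YY}=\dot{\eta}_{k,n}$, $b=\beta_{k-1,n-1}$ and $\pi_{\HH}=\bar{q}_n$, the relevant square being \autoref{lem:ext_range2}(2); as $k-1=n-3$ lies in the admissible band $2\leq k-1\leq n-3$, the inductive hypothesis again yields $S_{n-1}^+\subset\overline{\langle I_{k,n}^+\rangle}$. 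These two prescriptions jointly cover the whole range $2\leq k\leq n-2$ (and in fact either one works when $3\leq k\leq n-3$), so in every case I obtain a point-stabiliser embedding $S_{n-1}^+\subset\overline{\langle I_{k,n}^+\rangle}$; here $q_n,\bar{q}_n$ intertwine the coproducts, so they genuinely realise $S_{n-1}^+$ as a closed quantum subgroup of $S_n^+$, which is what \autoref{fact:universal} requires.

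Putting the two inclusions together gives $\overline{\langle S_{n-1}^+,S_n\rangle}\subset\overline{\langle I_{k,n}^+\rangle}$, with $S_{n-1}^+$ embedded as the stabiliser of a point. The theorem of \cite{BCF} asserts that this join already exhausts $S_n^+$, so $S_n^+\subset\overline{\langle I_{k,n}^+\rangle}$; the reverse containment $\overline{\langle I_{k,n}^+\rangle}\subset S_n^+$ is built into the definition of the Hopf image, and equality follows, completing the induction.

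I expect all the genuine difficulty to reside in the imported results rather than in the assembly above. The decisive ingredient, $\overline{\langle S_{n-1}^+,S_n\rangle}=S_n^+$ from \cite{BCF}, is deep and ultimately leans on the resolution of Banica's conjecture for $n=5$ in \cite{Banica18}; that is the real obstacle, and in this write-up it is simply cited. On the elementary side the only thing needing vigilance is the bookkeeping of admissible ranges --- guaranteeing that for each $2\leq k\leq n-2$ at least one of $\overline{\langle I_{k,n-1}^+\rangle}=S_{n-1}^+$ or $\overline{\langle I_{k-1,n-1}^+\rangle}=S_{n-1}^+$ is actually licensed by the inductive hypothesis --- which is exactly why the two boundary values $k=2$ and $k=n-2$ pin down the two separate cases, each handled by one of the two towers built in \autoref{lem:ext_range} and \autoref{lem:ext_range2}.
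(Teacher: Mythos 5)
Your proposal is correct and follows essentially the same route as the paper: induction on $n$ with base case $(k,n)=(2,4)$ from \cite{remark}, the inclusion $S_n\subset\overline{\langle I_{k,n}^+\rangle}$ from \autoref{lem:IS_gen}, the case split $k\leq n-3$ versus $k\geq 3$ feeding \autoref{lem:ext_range2} into \autoref{fact:universal} to get $S_{n-1}^+\subset\overline{\langle I_{k,n}^+\rangle}$, and the final appeal to \cite[Theorem 3.3]{BCF}. The range bookkeeping you flag as the only delicate point is handled exactly as in the paper, so there is nothing to add.
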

\begin{proof}
 We will proceed inductively. Case $n=4$, $k=2$ was solved in \cite{remark}, let us assume $n\geq5$ and assume $\overline{\langle I_{k,n-1}^+\rangle} = S_{n-1}^+$ for all $2\leq k\leq n-3$. 
 
 Firstly, $S_n\subseteq\overline{\langle I_{k,n}^+\rangle}$ from \autoref{lem:IS_gen}. Then
 \begin{enumerate}
  \item if $k\leq n-3$, we can use \autoref{fact:universal} with $\XX= I_{k,n}^+$, $\YY = I_{k,n-1}^+$, $\pi_{\HH}=q_n$, $\beta=\beta_{k,n}$, $\pi_\YY=\tilde{\eta}_{k,n}$ and $b=\beta_{k,n-1}$ thanks to \autoref{lem:ext_range2},
  \item if $k\geq3$, we can use \autoref{fact:universal} with $\XX= I_{k,n}^+$, $\YY = I_{k-1,n-1}^+$, $\pi_{\HH}=\bar{q}_n$, $\beta=\beta_{k,n}$, $\pi_\YY=\dot{\eta}_{k,n}$ and $b=\beta_{k-1,n-1}$ thanks to \autoref{lem:ext_range2}.
 \end{enumerate}
In both cases we conclude that $S_{n-1}^+\subset \overline{\langle I_{k,n}^+\rangle}$. Together with the first observation, this yields $\overline{\langle S_n,S_{n-1}^+\rangle}\subset \overline{\langle I_{k,n}^+\rangle}\subset S_n^+$, but from \cite[Theorem 3.3]{BCF} it follows that all inclusions are in fact equalities.
\end{proof}

\bibliography{biblio}
\bibliographystyle{siam}
\end{document}